\theoremstyle{plain}
\newtheorem{theorem}{Theorem}[section]
\newtheorem{lemma}[theorem]{Lemma}
\newtheorem{corollary}[theorem]{Corollary}
\newtheorem{proposition}[theorem]{Proposition}
\theoremstyle{definition}
\newtheorem{definition}[theorem]{Definition}
\newtheorem{remark}[theorem]{Remark}
\newtheorem{remarks and examples}[theorem]{Remarks and Examples}
\date{\today}
\title{On existence of Ulrich sheaf}
\author{Anindya Mukherjee,Pabitra Barik}
\begin{document}

\maketitle
\textbf{Abstract}:\\
    Let $X$ be a smooth projective variety carrying an Ulrich bundle.  In the first part of the  note, we construct an  Ulrich sheaf on any $n$-th symmetric power of  $X$, which is a singular variety when $Dim X>1$. As a consequence, we get the existence of an Ulrich bundle on $Hillb^{n}C$ where C is smooth projective curve. 
    Let $A$ be an abelian variety which carries an Ulrich bundle. In the second part of the note we show the existence of an Ulrich bundle on the blow-up of $A \times A$ along $A \times \{0\}$. 
    
 \textbf{Keywords}: Ulrich bundle, symmetric power, nested Hilbert scheme, blow-up, abelian variety.

 \section{Introduction}
 Let $X \subseteq \mathbb{P}^n$ be a projective variety endowed with a very ample line bundle $O_X(1)=O_{\mathbb{P}^{n}}(1)\otimes O_X$. All the varieties we are considering over the field of complex numbers $\mathbb{C}$.
	
	A coherent sheaf $E$ on $X$ is said to be Ulrich with respect to $O_X(1)$ if $h^{i}(X,E(-i))=h^{i}(X,E(-j+1))=0$ for all $i>0$ and $j<DimX$. If the coherent sheaf is locally free of finite rank, then we call it an Ulrich bundle.\\
   
   The study of Ulrich bundles was initially started as Ulrich modules in $80's$ in commutative algebra \cite{Goto}. Later, it came into the algebraic geometry as Ulrich bundles through the paper of Eisenbud and Schreyer\cite{Eisenbud}. It is a nice geometric interpretation of a hypersurface, whether it is set theoretically determinant 
    of linear forms. Noting this strong application, Eisenbud and Schreyer raised a very fundamental question about Ulrich sheaves in their paper \cite{Eisenbud}.\\
    \textbf{Question}: Does every projective variety carry an Ulrich sheaf? If it carries an Ulrich sheaf, what is the minimum possible rank of it?\\ 
    In case of curves, it is known that every projective curve(even singular) carries an Ulrich bundle.\\
    If  $X$ is a projective variety with $Dim X>1$, then not much is known in general. There are some cases where the existence of Ulrich bundles is known for surfaces \cite{Arnuad} and some other particular varieties in higher dimension  like the Grassmanian variety see \cite{Costa}, but still it is not solved in general. Our main goal in this paper is to address the existence problem of Ulrich bundle.\\ 
    In the first part of this note we have tried to show the existence of an Ulrich sheaf with respect to a natural polarization on a singular variety, namely the $n$-th symmetric power of a smooth projective variety, if the variety itself carries an Ulrich bundle. As a corollary of this result, we get the existence of an Ulrich bundle on the nested Hilbert scheme of points on a curve. Finally, we give a sufficient condition for the existence of an  Ulrich sheaf for the  finite quotient of a smooth projective variety.\\
    Existence of Ulrich bundles on Blow-up of a smooth variety at a point was first established at \cite{Kim}. Later using different techniques it was established in \cite{Secci}. In the second section of this note we establish the existence of an Ulrich bundle on the blow-up of an abelian variety along an abelian subvariety under some mild hypotheses. It extends the result of existence of an Ulrich bundle on the blow-up of a smooth projective variety at a point in the case of an abelian variety. Our main results are the following.   
    
    \begin{theorem}\label{theorem 1.1}
    Let $X$ be a smooth projective variety that carries an Ulrich bundle with respect to a polarization. Then there exist a natural polarization with respect to which $Sym^{n}X$ also carries an Ulrich sheaf.
    \end{theorem}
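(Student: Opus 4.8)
The plan is to realize $\operatorname{Sym}^{n}X$ as the quotient $\pi\colon X^{n}\to \operatorname{Sym}^{n}X=X^{n}/S_{n}$ by the symmetric group, and to produce the Ulrich sheaf by pushing forward a carefully twisted external product of the given Ulrich bundle. Write $H=\mathcal{O}_{X}(1)$ for the very ample line bundle, $d=\dim X$, and let $F$ be the Ulrich bundle on $X$. The natural polarization on $\operatorname{Sym}^{n}X$ is the line bundle $L$ obtained by descending the $S_{n}$-invariant bundle $H^{\boxtimes n}$ on $X^{n}$: if $X\hookrightarrow \mathbb{P}(V)$ is the embedding given by $H$, then $\operatorname{Sym}^{n}X$ maps into $\mathbb{P}(\operatorname{Sym}^{n}V)$ via $\{[v_{k}]\}\mapsto[v_{1}\cdots v_{n}]$, and $L$ is the restriction of $\mathcal{O}(1)$, so that (very ampleness to be revisited) $\pi^{*}L=H^{\boxtimes n}$.

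The first step is a cohomological normal form for an Ulrich bundle. Choosing a finite linear projection $\phi\colon X\to\mathbb{P}^{d}$ with $\phi^{*}\mathcal{O}(1)=H$, the Ulrich condition is equivalent to $\phi_{*}F\cong\mathcal{O}_{\mathbb{P}^{d}}^{\oplus r}$, whence by the projection formula
\[
H^{i}(X,F(m))\cong H^{i}(\mathbb{P}^{d},\mathcal{O}(m))^{\oplus r}.
\]
In particular $H^{\bullet}(X,F(m))$ is nonzero only in degree $0$ (for $m\ge 0$) or in degree $d$ (for $m\le -d-1$), and vanishes entirely for $-d\le m\le -1$.

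The key construction is the staircase-twisted external product
\[
\mathcal{F}\;=\;F\boxtimes F(d)\boxtimes F(2d)\boxtimes\cdots\boxtimes F\big((n-1)d\big)\qquad\text{on }X^{n}.
\]
Using the K\"unneth formula together with the normal form above, $H^{\bullet}\big(X^{n},\mathcal{F}\otimes(H^{\boxtimes n})^{-p}\big)$ is a sum of tensor products whose $k$-th factor is $H^{\bullet}\big(X,F((k-1)d-p)\big)$, and this $k$-th factor vanishes precisely when $p\in[(k-1)d+1,\,kd]$. Since the intervals $[(k-1)d+1,kd]$ for $k=1,\dots,n$ tile $[1,nd]$, for every $p$ with $1\le p\le nd$ at least one factor dies, so $H^{\bullet}\big(X^{n},\mathcal{F}\otimes(H^{\boxtimes n})^{-p}\big)=0$ for all $1\le p\le nd=\dim X^{n}$; that is, $\mathcal{F}$ is Ulrich on $X^{n}$ with respect to the Segre polarization $H^{\boxtimes n}$. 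I would stress here that the naive choice $F^{\boxtimes n}$ (all twists zero) is \emph{not} Ulrich — its top cohomology survives for $p\ge d+1$ — and the staircase of twists $0,d,\dots,(n-1)d$ is exactly what spreads the vanishing window across the whole range $[1,nd]$; discovering this is the conceptual heart of the argument.

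Finally I would descend to $\operatorname{Sym}^{n}X$. Since $\pi$ is finite and $\pi^{*}L=H^{\boxtimes n}$, the projection formula together with exactness of $\pi_{*}$ gives, for the coherent sheaf $\mathcal{E}:=\pi_{*}\mathcal{F}$,
\[
H^{i}\big(\operatorname{Sym}^{n}X,\ \mathcal{E}\otimes L^{-p}\big)\;\cong\;H^{i}\big(X^{n},\ \mathcal{F}\otimes(H^{\boxtimes n})^{-p}\big)=0\qquad(1\le p\le nd),
\]
so $\mathcal{E}$ is an Ulrich sheaf on $\operatorname{Sym}^{n}X$ with respect to $L$. If one prefers a canonical, $S_{n}$-symmetric sheaf, one replaces $\mathcal{F}$ by the equivariant bundle $\mathcal{G}=\bigoplus_{\sigma\in S_{n}}\sigma^{*}\mathcal{F}$ (still a direct sum of Ulrich bundles, hence Ulrich on $X^{n}$) and sets $\mathcal{E}=(\pi_{*}\mathcal{G})^{S_{n}}$; because taking $S_{n}$-invariants is exact over $\mathbb{C}$ it commutes with cohomology, and the same vanishing follows. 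The points where I expect the real care to be needed are not the cohomology bookkeeping but rather verifying that $L$ is genuinely very ample and equals the descent of $H^{\boxtimes n}$ (so that no power of $L$ is required, which would otherwise destroy the crucial range $[1,nd]$), and that the cohomological characterization of Ulrich sheaves remains valid on the normal but singular quotient $\operatorname{Sym}^{n}X$.
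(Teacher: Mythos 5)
Your proposal is correct in substance and follows the same overall strategy as the paper: build an Ulrich bundle on $X^{n}$ as a twisted external product and push it forward along the finite quotient $\pi\colon X^{n}\to \operatorname{Sym}^{n}X$, using the equivalence ``$E$ Ulrich for $(X^{n},\pi^{*}L)$ iff $\pi_{*}E$ Ulrich for $(\operatorname{Sym}^{n}X,L)$''. Your ``staircase'' bundle $F\boxtimes F(d)\boxtimes\cdots\boxtimes F((n-1)d)$ is exactly what the paper obtains by iterating Beauville's product result (its Proposition~\ref{proposition 2.2}); you simply reprove that result from scratch via the finite-linear-projection normal form $\phi_{*}F\cong\mathcal{O}_{\mathbb{P}^{d}}^{\oplus r}$ and K\"unneth, which is a clean and self-contained way to see it. The paper also only treats $n=2$ explicitly, whereas you handle general $n$ directly.

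The one real divergence is the treatment of the polarization, and it is also the one loose end you leave. You want $L$ on $\operatorname{Sym}^{n}X$ to be \emph{very} ample with $\pi^{*}L=H^{\boxtimes n}$ on the nose, and you propose to get this from the classical map $\operatorname{Sym}^{n}\mathbb{P}(V)\to\mathbb{P}(\operatorname{Sym}^{n}V)$ — but you explicitly defer verifying that this is a closed embedding, so as written this step is unproved. The paper avoids the issue entirely: its Lemma~\ref{lemma 3.2} only establishes that the descended bundle $N$ is \emph{ample}, and it then replaces $N$ by $N^{\otimes t}$ for $t\gg0$. Your stated fear that passing to a power ``would destroy the crucial range $[1,nd]$'' is unfounded and is precisely what the paper's Proposition~\ref{proposition 2.4} is for: one first replaces $F$ by an Ulrich bundle for $(X,\mathcal{O}_{X}(t))$ and then runs your staircase with respect to $\mathcal{O}_{X}(t)^{\boxtimes n}=(H^{\boxtimes n})^{\otimes t}$, so the vanishing window still tiles $[1,nd]$ for the new polarization. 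I would recommend closing your argument this way rather than relying on the unverified embedding claim; with that adjustment your proof is complete and matches the paper's.
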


    \begin{theorem}\label{theorem 1.2}
    Let $X=A \times A$ be an abelian variety of dimension $2n$, where $A$ is an abelian variety of dimension $n$. Consider $Z=A \times \{0\}$ be an abelian subvariety and let $X'=Bl_{A \times \{0\}}(A \times A)$ be the blow up. Assume $H$ be a very ample line bundle on $A$. If $(A,H)$ carries an Ulrich bundle, then there exist a polarization with respect to which $X'$ also carries an Ulrich bundle.

    \end{theorem}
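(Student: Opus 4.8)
The plan is to reduce the blow-up to a product of $A$ with a point-blow-up of $A$, and then to produce the Ulrich bundle by an external tensor product with a single corrective twist.

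First I would observe that $Z=A\times\{0\}$ is the fibre $p_2^{-1}(0)$ of the second projection $p_2\colon A\times A\to A$, so its ideal sheaf is $p_2^{*}I_{\{0\}}$ and its normal bundle is trivial, $N_{Z/X}\cong O_Z^{\oplus n}$ (both $T_X$ and $T_Z$ being trivial). Because $p_2$ is smooth, hence flat, and blow-ups commute with flat base change, one gets
$$X'=Bl_{Z}(A\times A)\;\cong\;Bl_{\{0\}}(A)\times_A(A\times A)\;\cong\;A\times\tilde A,$$
where $\tilde A=Bl_{0}A$ has structure map $\rho\colon\tilde A\to A$ and exceptional divisor $E_0\cong\mathbb P^{n-1}$; under this identification $\pi=\mathrm{id}_A\times\rho$ and $E=A\times E_0$ (consistently, $E\cong A\times\mathbb P^{n-1}$). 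The upshot is that the natural polarization $\pi^{*}(H\boxtimes mH)\otimes O_{X'}(-E)$ becomes $H\boxtimes\tilde H$, where $\tilde H=\rho^{*}(mH)-E_0$ is very ample on $\tilde A$ for $m\gg0$. Thus it suffices to build an Ulrich bundle on the product $(A\times\tilde A,\,H\boxtimes\tilde H)$.

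For the two factors I have Ulrich bundles in hand: $(A,H)$ carries an Ulrich bundle $U$ by hypothesis, and the point-blow-up $\tilde A=Bl_{0}A$ carries an Ulrich bundle $V$ with respect to $\tilde H$ by the existence results of \cite{Kim} and \cite{Secci} applied to $(A,H)$. The engine of the proof would then be the following product statement, which I would isolate as a lemma: if $(W_1,L_1)$ of dimension $a$ carries an Ulrich bundle $U$ and $(W_2,L_2)$ of dimension $b$ carries an Ulrich bundle $V$, then $\mathcal G:=U\boxtimes\big(V\otimes L_2^{\otimes a}\big)$ is Ulrich on $(W_1\times W_2,\,L_1\boxtimes L_2)$. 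Recalling that a sheaf is Ulrich on a $d$-dimensional $(W,L)$ exactly when $H^{\bullet}(W,\,\cdot\,\otimes L^{-p})=0$ for $1\le p\le d$, the verification is a Künneth computation:
$$H^{\bullet}\big(\mathcal G\otimes(L_1\boxtimes L_2)^{-p}\big)\;=\;\bigoplus_{i+j=\bullet}H^{i}\big(W_1,U\otimes L_1^{-p}\big)\otimes H^{j}\big(W_2,V\otimes L_2^{\,a-p}\big),$$
and this vanishes for every $1\le p\le a+b$ because the first factor kills the range $1\le p\le a$ (as $U$ is Ulrich) while the second factor kills the range $a+1\le p\le a+b$ (since then $1\le p-a\le b$ and $V$ is Ulrich). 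Applying this with $(W_1,L_1)=(A,H)$ and $(W_2,L_2)=(\tilde A,\tilde H)$, both of dimension $n$, yields the Ulrich bundle $U\boxtimes\big(V\otimes\tilde H^{\otimes n}\big)$ on $X'$.

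The point I expect to be the real content — and the step that is easy to get wrong — is the corrective twist by $L_2^{\otimes a}$. The naive external product $U\boxtimes V$ is never Ulrich: its Hilbert polynomial $\binom{t+a}{a}\binom{t+b}{b}$ is not proportional to $\binom{t+a+b}{a+b}$, and the vanishing windows $[1,a]$ and $[1,b]$ fail to cover $[1,a+b]$. Twisting the second factor by exactly $L_2^{\otimes a}$ (the dimension of the first factor) slides its window up to $[a+1,a+b]$ so the two windows tile $[1,a+b]$, and correspondingly the Hilbert polynomial telescopes, $\binom{t+a}{a}\binom{t+a+b}{b}=\binom{a+b}{a}\binom{t+a+b}{a+b}$, into the Ulrich form. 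Isolating this twist, and checking that $H\boxtimes\tilde H$ is genuinely the very ample polarization coming from the blow-up, are the only places requiring care; everything else is formal.
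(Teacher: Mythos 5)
Your proposal is correct, but it takes a genuinely different route from the paper. The paper keeps the blow-up $\pi:X'\to X=A\times A$ along $Z=A\times\{0\}$ as a codimension-$n$ blow-up of the $2n$-fold $X$, and runs it through the machinery of Section 4: Lemma \ref{lemma 4.2} reduces Ulrichness of $(\pi^{*}F)((e-n-1)E)$ on $(X',\pi^{*}L-E)$ to the vanishing $H^{i}(F(-pL)\otimes I_{Z}^{n+1-e-p})=0$, Lemma \ref{lemma 4.4} converts that ideal-sheaf vanishing (using triviality of $I_Z/I_Z^2$ on an abelian subvariety and induction on the powers $I_Z^k/I_Z^{k+1}$) into the statement that $F|_{Z}$ is Ulrich, and the proof finishes by computing $i^{*}(F\boxtimes F(n(t+1)H))\cong F^{\oplus r}$, which is Ulrich on $(A,(t+1)H)$. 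You instead observe that $Z=p_2^{-1}(0)$ is a fibre of the smooth second projection, so by flat base change $X'\cong A\times Bl_{0}A$ with $E=A\times E_0$ and $\pi^{*}(H\boxtimes mH)-E\cong H\boxtimes(\rho^{*}(mH)-E_0)$; you then feed the hypothesis on $(A,H)$ and the Kim--Secci existence result for point blow-ups into the external-product lemma (which is exactly Proposition \ref{proposition 2.2}, and your K\"unneth verification of it is sound, including the correct twist by $L_2^{\otimes a}$). Your argument is shorter and needs none of Lemmas \ref{lemma 4.2}--\ref{lemma 4.4}, at the price of importing the point-blow-up theorem as a black box and of being tied to the very special shape of $Z$: it only works because $Z$ is a fibre of a projection, so it would not extend to an arbitrary abelian subvariety $Z\subset X$, whereas the paper's lemmas are formulated for that general situation (and are reused in the remark about $X=A^{m}$). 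One small point to make explicit if you write this up: you should fix a single $m$ for which Secci's theorem simultaneously makes $\rho^{*}(mH)-E_0$ very ample \emph{and} produces the Ulrich bundle $V$ for that same polarization (his statement does give both for $m\gg 0$), and note that the case $n=1$ is degenerate since $Bl_{0}A=A$ there.
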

 
 \section{preliminary results}
 \begin{definition}\label{definition 2.1}
 Let $X$ be a projective variety and $O_X(1)$ is very ample line bundle on $X$. A coherent sheaf $F$ on $(X,O_X(1))$ is said to be Ulrich if $H^{i}(X,F(-j))=0$ for all $i\geq 0$ and $1\leq j \leq Dim X$. If the coherent sheaf is locally free of finite rank, then we call it an Ulrich bundle.
 \end{definition}
 \begin{proposition}\label{proposition 2.2}
 Let $X$ and $Y$ be two projective varieties. If $E$ and $F$ are two Ulrich bundles on $(X,O_X(1))$ and $(Y,O_Y(1))$ respectively, then $E \boxtimes F(n)$ is Ulrich for $(X \times Y,O_X(1) \boxtimes O_Y(1))$ where $n:=Dim X$.
 
 \end{proposition}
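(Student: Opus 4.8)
The plan is to deduce everything from the Künneth formula after a formal manipulation of the twist. Write $m := \dim Y$, so that $\dim(X \times Y) = n + m$, let $p_X, p_Y$ denote the two projections, and set $L := O_X(1) \boxtimes O_Y(1) = p_X^* O_X(1) \otimes p_Y^* O_Y(1)$ for the given polarization. The first step is purely bookkeeping: since twisting by $L^{-j}$ factors through the two projections, one rewrites
\[(E \boxtimes F(n))(-j) \;\cong\; p_X^*\big(E(-j)\big) \otimes p_Y^*\big(F(n-j)\big) \;=\; E(-j) \boxtimes F(n-j).\]

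With the sheaf in external-product form, I would invoke the Künneth formula for coherent cohomology over the base field $\mathbb{C}$,
\[H^k\big(X \times Y,\, E(-j) \boxtimes F(n-j)\big) \;\cong\; \bigoplus_{a+b=k} H^a\big(X, E(-j)\big) \otimes_{\mathbb{C}} H^b\big(Y, F(n-j)\big),\]
and thereby reduce the entire statement to showing that every summand on the right vanishes for all $k \ge 0$ and all $1 \le j \le n + m$.

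The heart of the argument is then a case split on $j$ using the Ulrich hypotheses of Definition~\ref{definition 2.1}. For $1 \le j \le n$ the left factor $H^a(X, E(-j))$ vanishes for every $a$, because $E$ is Ulrich on $(X, O_X(1))$ with $\dim X = n$; hence every Künneth summand dies. For $n+1 \le j \le n+m$ I would write $j = n + j'$ with $1 \le j' \le m$, so that $F(n-j) = F(-j')$, and now the right factor $H^b(Y, F(-j'))$ vanishes for every $b$ because $F$ is Ulrich on $(Y, O_Y(1))$ with $\dim Y = m$. Since these two ranges tile the whole interval $1 \le j \le n+m$, all cohomology groups vanish and $E \boxtimes F(n)$ is Ulrich.

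I do not anticipate a serious obstacle: the content is essentially the arithmetic of the twist, and the precise role of the shift by $n = \dim X$ in $F(n)$ is to slide the $Y$-vanishing window $1 \le j' \le m$ so that it starts exactly where the $X$-vanishing window $1 \le j \le n$ ends, leaving no gap in $[1, n+m]$. The only point deserving a word of justification is the applicability of the Künneth formula, which holds for quasi-coherent sheaves on a product of two varieties over the field $\mathbb{C}$ (flatness over the base field being automatic), so no additional hypotheses on $E$ or $F$ are needed.
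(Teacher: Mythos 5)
Your proof is correct and is essentially the argument of the cited reference: the paper itself gives no proof beyond ``see \cite{Arnuad}'', and Beauville's proof there is exactly your Künneth computation, with the twist by $n=\dim X$ gluing the two vanishing windows $1\le j\le n$ and $n+1\le j\le n+m$ together. Nothing further is needed.
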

 \begin{proof}
     see \cite{Arnuad}
 \end{proof}
 \begin{proposition}\label{proposition 2.3}
 Let $X$ and $Y$ be two projective varieties. Let $\pi:X \rightarrow Y$ be a finite, surjective morphism, let $L$ be a very ample line bundle on $Y$ and let $E$ be a vector bundle on $X$. Then $E$ is Ulrich for $(X,\pi^{*}L)$ if and only if $\pi_{*}E$ is Ulrich for $(Y,L)$.
 
 \end{proposition}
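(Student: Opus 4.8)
The plan is to reduce the entire statement to an identity of cohomology groups on $X$ and $Y$, exploiting that a finite morphism is affine. First I would record the two elementary facts that do all the work. Since $\pi$ is finite it is affine, so for every coherent sheaf $\mathcal{F}$ on $X$ the higher direct images vanish, $R^{p}\pi_{*}\mathcal{F}=0$ for $p>0$. Consequently the Leray spectral sequence for $\pi$ degenerates at the $E_{2}$-page and yields canonical isomorphisms $H^{i}(X,\mathcal{F})\cong H^{i}(Y,\pi_{*}\mathcal{F})$ for all $i\geq 0$. Second, the projection formula gives $\pi_{*}\big(E\otimes\pi^{*}L^{-j}\big)\cong\pi_{*}E\otimes L^{-j}$ for every integer $j$, since $L^{-j}$ is locally free on $Y$.

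Combining these, for each $i\geq 0$ and each $j$ I obtain a chain of natural isomorphisms
\[
H^{i}\big(X,\,E\otimes(\pi^{*}L)^{-j}\big)\;\cong\;H^{i}\big(Y,\,\pi_{*}(E\otimes\pi^{*}L^{-j})\big)\;\cong\;H^{i}\big(Y,\,\pi_{*}E\otimes L^{-j}\big).
\]
Thus the cohomology groups appearing in the Ulrich condition for $E$ on $(X,\pi^{*}L)$ coincide, group by group, with those appearing in the Ulrich condition for $\pi_{*}E$ on $(Y,L)$.

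It then remains to match the index ranges in Definition \ref{definition 2.1}. Since $\pi$ is finite and surjective its fibres are zero-dimensional and it is dominant, so $\dim X=\dim Y=:d$; hence the condition ``$1\leq j\leq\dim X$'' on $X$ is literally the same range ``$1\leq j\leq\dim Y$'' on $Y$. Therefore the vanishing $H^{i}(X,E\otimes(\pi^{*}L)^{-j})=0$ for all $i\geq 0$ and $1\leq j\leq d$ holds if and only if $H^{i}(Y,\pi_{*}E\otimes L^{-j})=0$ in the same range, which is exactly the biconditional claimed. Along the way I would note that $\pi^{*}L$ is very ample because $\pi$ is finite, so the statement on $X$ is well posed, and that $\pi_{*}E$ is coherent, so the condition on $Y$ is meaningful even though $\pi_{*}E$ need not be locally free.

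I expect no serious obstacle here: the argument is entirely formal once the vanishing of the higher direct images is invoked. The only points demanding genuine care are the bookkeeping that the two index ranges agree---guaranteed by $\dim X=\dim Y$---and the observation that $\pi_{*}E$ is tested as an Ulrich \emph{sheaf} rather than a bundle, since $\pi_{*}E$ is locally free only when $\pi$ is in addition flat.
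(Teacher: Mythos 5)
Your argument is correct and is essentially the standard one: the paper itself offers no proof of Proposition \ref{proposition 2.3} beyond citing \cite{Arnuad}, and the proof there is exactly your combination of the projection formula with the vanishing of higher direct images for the affine morphism $\pi$ (which is the paper's Proposition \ref{proposition 2.6}), together with $\dim X=\dim Y$. The one inaccuracy is your side remark that $\pi^{*}L$ is very ample because $\pi$ is finite: finiteness only gives that $\pi^{*}L$ is ample and globally generated, and for $\deg\pi>1$ it is typically not very ample (e.g.\ the pullback of $O_{\mathbb{P}^{1}}(1)$ under a hyperelliptic double cover is a degree-$2$ pencil on a curve of genus $\geq 2$); this does not affect your cohomological equivalence, but it means the Ulrich condition on $(X,\pi^{*}L)$ must be read purely as the vanishing $H^{i}(X,E\otimes(\pi^{*}L)^{-j})=0$ rather than through Definition \ref{definition 2.1}, which presupposes a very ample polarization.
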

 \begin{proof}
     see \cite{Arnuad}
 \end{proof}
 \begin{proposition}\label{proposition 2.4}
     Let $X \subset \mathbb{P}^{m}$ be a variety of dimension $n$, carrying an Ulrich bundle of rank $r$ for $(X,O_X(1))$. Then $(X, O_{X}(d))$ also carries an Ulrich bundle of rank $rn!$ for any $d\geq 1$. 
 \end{proposition}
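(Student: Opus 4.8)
The plan is to reduce the statement, via the finite pushforward criterion of Proposition~\ref{proposition 2.3}, to producing a single Ulrich bundle of rank $n!$ on $(\mathbb{P}^n,O_{\mathbb{P}^n}(d))$, and then to transport it back to $X$ by tensoring with the Ulrich bundle we already have. Concretely, I would fix a generic finite linear projection $\pi\colon X\to\mathbb{P}^n$ coming from the embedding $X\subseteq\mathbb{P}^m$; since the projection center $\Lambda\cong\mathbb{P}^{m-n-1}$ can be chosen disjoint from $X$ (as $\dim X+\dim\Lambda=m-1<m$), the map $\pi$ is finite and surjective with $\pi^{*}O_{\mathbb{P}^n}(1)=O_X(1)$, hence $\pi^{*}O_{\mathbb{P}^n}(d)=O_X(d)$. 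Because $E$ is Ulrich for $(X,O_X(1))=(X,\pi^{*}O_{\mathbb{P}^n}(1))$, Proposition~\ref{proposition 2.3} makes $\pi_{*}E$ an Ulrich sheaf on $(\mathbb{P}^n,O_{\mathbb{P}^n}(1))$; since Ulrich modules over the polynomial ring are free, $\pi_{*}E\cong O_{\mathbb{P}^n}^{\oplus N}$ with $N=r\deg X$. It is essential here that on $\mathbb{P}^n$ with the linear polarization ``Ulrich'' forces ``trivial''.

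The heart of the argument is the construction of an Ulrich bundle $G$ of rank $n!$ on $(\mathbb{P}^n,O_{\mathbb{P}^n}(d))$. For this I would use the classical identification $\mathbb{P}^n\cong\operatorname{Sym}^n\mathbb{P}^1$ together with the symmetrization map $q\colon(\mathbb{P}^1)^n\to\mathbb{P}^n$, i.e.\ the quotient by the $S_n$-action, which is finite surjective of degree $n!$ and satisfies $q^{*}O_{\mathbb{P}^n}(1)=O(1,\dots,1)$, hence $q^{*}O_{\mathbb{P}^n}(d)=O(d,\dots,d)$. On a single factor $(\mathbb{P}^1,O_{\mathbb{P}^1}(d))$ the line bundle $O_{\mathbb{P}^1}(d-1)$ is Ulrich, since $H^{\bullet}(\mathbb{P}^1,O_{\mathbb{P}^1}(d-1-d))=H^{\bullet}(\mathbb{P}^1,O_{\mathbb{P}^1}(-1))=0$. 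Applying Proposition~\ref{proposition 2.2} repeatedly to the $n$ factors yields a bundle $U$ on $((\mathbb{P}^1)^n,O(d,\dots,d))$ that is Ulrich there; as every twist introduced by Proposition~\ref{proposition 2.2} is by a line bundle, $U$ is itself a line bundle. Finally $q$ is a finite morphism from the Cohen--Macaulay variety $(\mathbb{P}^1)^n$ onto the smooth variety $\mathbb{P}^n$, hence flat, so $G:=q_{*}U$ is locally free of rank $n!\cdot 1=n!$; and because $U$ is Ulrich for $((\mathbb{P}^1)^n,q^{*}O_{\mathbb{P}^n}(d))$, Proposition~\ref{proposition 2.3} shows $G$ is an Ulrich bundle on $(\mathbb{P}^n,O_{\mathbb{P}^n}(d))$.

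With $G$ in hand I would set $F:=E\otimes\pi^{*}G$, a vector bundle on $X$ of rank $r\cdot n!$. By the projection formula together with the computation of $\pi_{*}E$, one gets $\pi_{*}F=\pi_{*}(E\otimes\pi^{*}G)=(\pi_{*}E)\otimes G=G^{\oplus N}$, which is a direct sum of Ulrich bundles and therefore Ulrich on $(\mathbb{P}^n,O_{\mathbb{P}^n}(d))$. Applying Proposition~\ref{proposition 2.3} one last time to $\pi$ with $L=O_{\mathbb{P}^n}(d)$ and $\pi^{*}L=O_X(d)$, the Ulrich property of $\pi_{*}F$ is equivalent to that of $F$, so $F$ is the desired Ulrich bundle of rank $rn!$ on $(X,O_X(d))$.

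I expect the main obstacle to be the construction of $G$: the key insight is that the symmetric-product presentation $\mathbb{P}^n=\operatorname{Sym}^n\mathbb{P}^1$ converts the problem on the $d$-uple re-embedding of $\mathbb{P}^n$ into a product problem on $(\mathbb{P}^1)^n$ that Proposition~\ref{proposition 2.2} already resolves, and that this is what makes the rank equal to $\deg q=n!$. The remaining technical checks are that $q^{*}O_{\mathbb{P}^n}(1)$ is the balanced polarization $O(1,\dots,1)$ and that $q$ is flat, the latter being exactly what guarantees $q_{*}U$ is locally free, so that the output is an Ulrich \emph{bundle} and not merely an Ulrich sheaf.
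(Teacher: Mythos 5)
Your argument is correct and is essentially the proof the paper delegates to its reference \cite{Arnuad}: reduce to $\mathbb{P}^n$ by a finite linear projection (where Ulrich for $O(1)$ means trivial), build the rank-$n!$ Ulrich bundle for $(\mathbb{P}^n,O(d))$ as the pushforward of a line bundle under the degree-$n!$ quotient $(\mathbb{P}^1)^n\to\operatorname{Sym}^n\mathbb{P}^1\cong\mathbb{P}^n$, and twist back by $E$ via the projection formula. All the supporting checks (flatness of $q$ by miracle flatness, $q^{*}O_{\mathbb{P}^n}(1)=O(1,\dots,1)$, $\pi_{*}E\cong O_{\mathbb{P}^n}^{\oplus r\deg X}$) are the standard ones and are handled correctly.
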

 \begin{proof}
 see \cite{Arnuad}
 \end{proof}
 \begin{proposition}\label{proposition 2.5}
     Let $X$ be a smooth variety and let $Y \subseteq X$ be a smooth irreducible subvariety with sheaf of ideals $I_{Y}$.Then we have the following short exact sequence,\\
     \begin{equation*}
         0 \rightarrow I_{Y}/I_{Y}^{2} \rightarrow \Omega_{X} \otimes O_{Y}\rightarrow \Omega_{Y} \rightarrow 0
     \end{equation*}
 \end{proposition}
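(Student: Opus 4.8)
The plan is to realize this as the conormal (second fundamental) exact sequence attached to the closed immersion $i \colon Y \hookrightarrow X$, and then to use the smoothness of both $X$ and $Y$ to upgrade the general right-exact sequence into a short exact sequence. First I would recall the formal fact, valid for any closed immersion over $\mathbb{C}$ with ideal sheaf $I_Y$, that there is a canonical right-exact sequence
\begin{equation*}
I_Y/I_Y^2 \xrightarrow{\;\delta\;} \Omega_X \otimes O_Y \longrightarrow \Omega_Y \longrightarrow 0,
\end{equation*}
where $\delta$ is induced by the universal derivation, carrying the class of a local section $f$ of $I_Y$ to $df \otimes 1$, and where $\Omega_X \otimes O_Y = i^*\Omega_X$. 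The surjectivity on the right and exactness in the middle are formal consequences of the functoriality of Kähler differentials, so this already supplies two of the three required exactness statements with no hypothesis on $X$ or $Y$.

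The remaining point — and the only substantive one — is the injectivity of $\delta$ on the left, and this is where smoothness enters. I would argue locally. Since $Y$ is a smooth subvariety of the smooth variety $X$, the embedding is regular of codimension $c := Dim X - Dim Y$, so $I_Y/I_Y^2$ is locally free of rank $c$, while $\Omega_X \otimes O_Y$ and $\Omega_Y$ are locally free of ranks $Dim X$ and $Dim Y$ respectively. Fixing a closed point $y \in Y$, I would choose a regular system of parameters $x_1, \dots, x_{Dim X}$ of the local ring $O_{X,y}$ adapted to $Y$, that is, with $I_{Y,y}$ generated by $x_1, \dots, x_c$. Then $dx_1, \dots, dx_{Dim X}$ is an $O_{X,y}$-basis of $(\Omega_X)_y$, the images of $dx_{c+1}, \dots, dx_{Dim X}$ form a basis of $(\Omega_Y)_y$, and $\delta$ sends the basis $[x_1], \dots, [x_c]$ of $I_Y/I_Y^2$ to the linearly independent elements $dx_1, \dots, dx_c$. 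Hence $\delta$ is a locally split injection, which is exactly what is needed to close up the sequence on the left.

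Equivalently, and avoiding the explicit choice of coordinates, I could note that since $\Omega_Y$ is locally free the surjection $\Omega_X \otimes O_Y \to \Omega_Y$ splits locally, so its kernel $K$ is a local direct summand of a locally free sheaf and hence is itself locally free of rank $c$. By the right-exactness established above, $\delta$ maps $I_Y/I_Y^2$ onto $K$, and a surjection between locally free sheaves of the same rank $c$ is automatically an isomorphism (locally it is a surjective endomorphism of a finitely generated free module, hence injective). Either way $\delta$ is injective, completing the short exact sequence.

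I expect the only genuinely delicate input to be the structural claim that a smooth subvariety $Y$ of a smooth $X$ is regularly embedded, with $I_Y/I_Y^2$ locally free of rank equal to the codimension; once this is granted, together with the local freeness of $\Omega_X$ and $\Omega_Y$ coming from smoothness, the rest is formal manipulation of cotangent sheaves and a rank count. All of these facts are standard for smooth varieties over $\mathbb{C}$, so no serious obstacle arises.
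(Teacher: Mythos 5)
Your proposal is correct and is exactly the standard argument: the paper itself offers no proof, simply citing Hartshorne (Theorem II.8.17), and what you have written out — the general conormal right-exact sequence plus left-exactness from choosing local parameters adapted to the regularly embedded smooth subvariety (or the equivalent rank count) — is precisely the proof underlying that citation. No gaps; both of your routes to the injectivity of $\delta$ are valid.
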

 \begin{proof}
 see \cite[page 178]{Hart}
 \end{proof}
 \begin{proposition}\label{proposition 2.6}
     Let $f:X \rightarrow Y$ be a continuous function between two topological spaces. Let $F$ be a sheaf on $X$ such that $R^{i}f_{*}F=0$ for all $i >0$. Then\\
     \begin{equation*}
         H^{i}(X,F) \cong H^{i}(Y,f_{*}F)
     \end{equation*}
     for all $i\geq0$
     
 \end{proposition}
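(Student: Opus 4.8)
The plan is to realize both cohomology groups as the cohomology of a single complex of abelian groups, by transporting an injective resolution from $X$ to $Y$ via $f_*$. So I would begin by choosing an injective resolution $0 \to F \to I^0 \to I^1 \to \cdots$ of $F$ by sheaves on $X$. By the very definition of sheaf cohomology, $H^i(X,F) \cong H^i(\Gamma(X, I^{\bullet}))$, so the left-hand side is under control from the outset.

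Next I would apply the pushforward $f_*$ to this resolution to obtain a complex $f_* I^{\bullet}$ of sheaves on $Y$, and record two of its properties. First, $f_*$ admits the exact left adjoint $f^{-1}$, so $f_*$ carries injective sheaves to injective sheaves; in particular each $f_* I^{j}$ is flasque, and hence acyclic for the global-section functor $\Gamma(Y,-)$. Second, I would identify the cohomology sheaves of the complex $f_* I^{\bullet}$. Since each $I^{j}$ is injective and therefore $f_*$-acyclic, the cohomology sheaves of $f_* I^{\bullet}$ compute the higher direct images, $\mathcal{H}^{i}(f_* I^{\bullet}) \cong R^{i} f_* F$.

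Invoking the hypothesis $R^{i} f_* F = 0$ for $i>0$, together with $\mathcal{H}^0(f_* I^{\bullet}) \cong f_* F$, shows that $f_* I^{\bullet}$ is a resolution of $f_* F$, and by the first property it is an acyclic resolution. Consequently $H^{i}(Y, f_* F) \cong H^{i}(\Gamma(Y, f_* I^{\bullet}))$. Finally, the definition of the pushforward gives $\Gamma(Y, f_* I^{j}) = (f_* I^{j})(Y) = I^{j}(f^{-1}(Y)) = \Gamma(X, I^{j})$, so the complexes $\Gamma(Y, f_* I^{\bullet})$ and $\Gamma(X, I^{\bullet})$ are literally the same complex of abelian groups. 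Taking cohomology yields $H^{i}(Y, f_* F) \cong H^{i}(X, F)$ for all $i \geq 0$.

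The step I expect to require the most care is the identification $\mathcal{H}^{i}(f_* I^{\bullet}) \cong R^{i} f_* F$, since this is precisely where the hypothesis is consumed. It rests on the fact that an injective (or merely $f_*$-acyclic) resolution may be used to compute higher direct images; equivalently, it is the statement that the Leray spectral sequence $E_2^{p,q} = H^{p}(Y, R^{q} f_* F) \Rightarrow H^{p+q}(X,F)$ degenerates at the $E_2$-page once all rows with $q>0$ vanish. I would present the argument in the resolution-theoretic form above rather than quoting the spectral sequence, so that the only external input is that $f_*$ preserves injectives and that flasque sheaves are $\Gamma$-acyclic.
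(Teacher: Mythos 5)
Your argument is correct and complete: transporting an injective resolution of $F$ through $f_*$, noting that $f_*$ preserves injectives (hence flasque, $\Gamma$-acyclic objects), using the hypothesis $R^i f_* F=0$ to see that $f_* I^{\bullet}$ is an acyclic resolution of $f_*F$, and observing $\Gamma(Y,f_*I^{j})=\Gamma(X,I^{j})$ is exactly the standard proof of this degenerate Leray statement. The paper itself gives no argument and merely cites Hartshorne (this is Exercise III.8.1 there), so your proposal supplies precisely the proof being delegated to the reference.
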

 \begin{proof}
 see \cite[page 252]{Hart}
 \end{proof}
 \begin{proposition}\label{proposition 2.7}
    Let $X$ be a smooth projective variety and $\pi:X' \rightarrow X$ be the blow up of $X$ along a smooth subvariety $Z$. Then we have, 
    $\pi_{*}O_{X'}(-nE)=I^{n}_{Z}$ and $R^{i}\pi_{*}O_{X'}(-nE)=0$ for all $i> 0$ and $n \geq 1$.
    \begin{proof}
    see \cite{stack}
    \end{proof}
 \end{proposition}

\section{Ulrich sheaf on symmetric power}
In this section we construct an Ulrich sheaf on symmetric power of a variety if the variety itself carries an Ulrich bundle. It is a singular variety for dimension greater than one, on which the existence of Ulrich sheaf is shown. 
Before going to the proof of the theorem, we first state a proposition. It is well known, but we give the proof for the sake of completeness.
\begin{proposition}\label{proposition 3.1}    
  Let $G$ be a finite group acting on a Noetherian scheme $Y$ such that the orbit of any point is contained in an affine open  subvariety of $Y$. Then the quotient map $q: Y \rightarrow Y/G$ is finite.   
\end{proposition}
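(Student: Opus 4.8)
The plan is to reduce the global assertion to a purely affine one, since finiteness of a morphism is local on the target. Concretely, I would show that every point of $Y$ admits a $G$-stable affine open neighbourhood $U=\operatorname{Spec} B$; on such a $U$ the quotient is $\operatorname{Spec} B^{G}$, the preimage $q^{-1}(q(U))$ equals $U$ (because $U$, being $G$-stable, is a union of orbits), and these $\operatorname{Spec} B^{G}$ glue to give $Y/G$ together with the map $q$. Finiteness of $q$ then follows from finiteness of each restriction $q|_{U}\colon \operatorname{Spec} B\to\operatorname{Spec} B^{G}$, i.e. from $B$ being a finite $B^{G}$-module. Thus the whole proposition splits into (i) producing a $G$-invariant affine cover and (ii) a ring-theoretic finiteness statement.

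For step (i) I would argue as follows. Fix $y\in Y$; its orbit $O=G\cdot y$ is finite and, by hypothesis, lies in some affine open $V=\operatorname{Spec} A$. Put $V'=\bigcap_{g\in G} gV$, which is an open, $G$-stable neighbourhood of $O$ (it contains $O$ since $O=gO\subseteq gV$ for every $g$). The closed subset $V\setminus V'=V(I)$ of the affine scheme $V$ is disjoint from the finitely many primes $\mathfrak p_{1},\dots,\mathfrak p_{m}$ of $O$, so $I\not\subseteq\mathfrak p_{i}$ for each $i$, and by prime avoidance there is $f\in I$ with $f\notin\mathfrak p_{i}$ for all $i$; then $D(f)\subseteq V'$ is an affine open containing $O$. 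To make it $G$-stable I would replace $f$ by the invariant product $F=\prod_{g\in G} g^{*}(f|_{V'})\in\Gamma(V',\mathcal O_{Y})^{G}$. Its non-vanishing locus $D(F)$ is $G$-stable, still contains $O$ (each factor $f(g y')$ is nonzero there, as $g y'\in O\subseteq D(f)$), and satisfies $D(F)\subseteq D(f)$, hence is a principal open of the affine scheme $D(f)$ and therefore affine. This produces the desired $G$-invariant affine open $U=\operatorname{Spec} B\ni y$.

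For step (ii), on $U=\operatorname{Spec} B$ with quotient $\operatorname{Spec} B^{G}$, integrality is the classical trick: each $b\in B$ is a root of the monic polynomial $\prod_{g\in G}(T-g\cdot b)$, whose coefficients are symmetric in the $g\cdot b$ and hence lie in $B^{G}$, so $B$ is integral over $B^{G}$. Since in our setting $Y$ is of finite type over $\mathbb C$, each $B$ is a finitely generated $\mathbb C$-algebra, hence a finitely generated $B^{G}$-algebra; an algebra that is integral and of finite type over a subring is module-finite over it, so $B$ is a finite $B^{G}$-module and $q|_{U}$ is finite. Gluing over the cover gives that $q$ is finite. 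The main obstacle is step (i): the hypothesis only provides an affine open containing each orbit, and the naive candidate $V'=\bigcap_{g} gV$ need not be affine, so the delicate point is the passage through the principal open $D(f)$ (via prime avoidance) and then to the $G$-invariant refinement $D(F)$; once a $G$-stable affine cover is in hand, the finiteness in step (ii) is routine.
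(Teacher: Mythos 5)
Your proof is correct, and its affine core coincides with the paper's: both establish that $B$ is integral over $B^{G}$ via the monic polynomial $\prod_{g\in G}(T-g\cdot b)$ (whose coefficients, being symmetric in the $g\cdot b$, lie in $B^{G}$), and then upgrade integrality to module-finiteness using that $B$ is a finitely generated algebra over the base field. Where you genuinely diverge is that the paper simply declares ``Let $Y=\operatorname{Spec}A$ be an affine scheme'' and never performs the reduction to the affine case, so its stated hypothesis --- that every orbit lies in an affine open --- is never actually used; it also opens with the non sequitur that finite fibers imply finite type. Your step (i) supplies exactly the missing content: the prime-avoidance construction of a principal open $D(f)\subseteq\bigcap_{g}gV$ containing the orbit, followed by the passage to the $G$-invariant function $F=\prod_{g}g^{*}f$ whose non-vanishing locus is a $G$-stable affine open; since such opens are unions of orbits, their invariant-ring spectra glue to $Y/G$ and finiteness can be checked locally on the target. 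This is the standard complete argument (as in the construction of quotients by finite groups), and it buys a proof that actually applies to non-affine $Y$ --- which is the only case of interest in the paper, since the proposition is invoked for $\pi:X\times X\to\operatorname{Sym}^{2}X$ with $X$ projective. The one point worth making explicit in your write-up is that the gluing of the $\operatorname{Spec}B^{G}$ requires checking compatibility on overlaps of the $G$-stable affines (overlaps of $G$-stable opens are again $G$-stable, and taking invariants is compatible with the localizations involved); this is routine but is the step that makes ``finiteness is local on the target'' applicable.
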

\begin{proof}
    Clearly, each fiber is finite, so it is a morphism of finite type. Let $Y=specA$ be an affine scheme. Then we have $Y/G$ is $spec(A^{G})$ where $A$ is a Noetherian ring and $A^{G}$ is the subring of $G$ invariants. Now we will show that the map $q: A^{G} \rightarrow A$ is integral. Let $y \in A$ be an element. Consider the polynomial $p_{y}=\prod_{g\in G}(x-q(y))$. Clearly $y$ is a root of the polynomial. Thus $A$ is integral over $A^{G}$. Consider $k\subseteq A^{G} \subseteq A$ is a Noetherian ring and also it is finitely generated as a $k$ algebra. We have just shown $A$ is integral over $A^{G}$, hence it is a finitely generated $k$ algebra. Thus, the map $q: Y \rightarrow Y/G$ is finite.      
\end{proof}
Now we state and prove a lemma that is essential in proving our theorem. It guarantees the existence of an ample line bundle on $Sym^{n}X$ such that its pull back is the given polarization obtained by taking the tensor product of the pull back of very ample line bundle on each component of  $X \times X$.
\begin{lemma}\label{lemma 3.2}
    Let $X$ be a smooth projective variety, let $A$ be an ample line bundle on $X$ and consider the quotinet map $\pi:X \times X \rightarrow Sym^{2}X$. Then there exists an ample line bundle $N$ on $Sym^{2}X$ such that  $\pi^{*}N \cong O_X(A) \boxtimes O_X(A)$.  
    \end{lemma}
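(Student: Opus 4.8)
The plan is to present $Sym^{2}X$ as the quotient $(X\times X)/S_{2}$, where $S_{2}=\{1,\sigma\}$ acts by the swap $\sigma(x,y)=(y,x)$, and then to obtain $N$ by descending the invariant line bundle $L:=O_X(A)\boxtimes O_X(A)$ along the quotient map $\pi$. First I would record that $\pi$ is finite: since $X\times X$ is projective, every orbit is a finite set of points and hence lies in an affine open subscheme, so Proposition \ref{proposition 3.1} applies. Next I would upgrade $L$ to an $S_{2}$-linearized bundle. Writing $p_{1},p_{2}$ for the two projections, one has $\sigma^{*}L=p_{2}^{*}O_X(A)\otimes p_{1}^{*}O_X(A)$, and the commutativity of the tensor product furnishes a canonical isomorphism $\phi:\sigma^{*}L\xrightarrow{\ \sim\ }L$. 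A short check shows that $\phi$ satisfies the $S_{2}$-cocycle condition $\phi\circ\sigma^{*}\phi=\mathrm{id}_{L}$, so $(L,\phi)$ is an $S_{2}$-linearized line bundle.

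The heart of the argument is to descend $(L,\phi)$ to a line bundle $N$ on $Sym^{2}X$ with $\pi^{*}N\cong L$. For this I would invoke the standard descent criterion (Kempf's descent lemma): an $S_{2}$-linearized bundle on $X\times X$ is pulled back from the quotient exactly when, at each point, the stabilizer acts trivially on the fiber. The only points with nontrivial stabilizer are those on the diagonal $\Delta=\{(x,x)\}$, whose stabilizer is the full group $S_{2}$. Over such a point the generator $\sigma$ acts on the fiber $L_{(x,x)}=A_{x}\otimes A_{x}$ by the swap $a\otimes b\mapsto b\otimes a$. The decisive observation is that $A$ is a \emph{line} bundle, so $A_{x}$ is one-dimensional and the swap is the identity on the one-dimensional space $A_{x}\otimes A_{x}$. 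Hence the stabilizer acts trivially on every fiber, the descent hypothesis is satisfied, and the desired $N$ exists.

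Finally I would verify ampleness. Since $O_X(A)$ is ample on $X$, the external tensor product $L=O_X(A)\boxtimes O_X(A)$ is ample on $X\times X$. As $\pi$ is finite and surjective and $\pi^{*}N\cong L$ is ample, the standard fact that ampleness descends along a finite surjective morphism shows that $N$ is ample, finishing the proof.

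I expect the main obstacle to be the verification of the descent hypothesis — more precisely, isolating that the only possible obstruction is supported on the diagonal and then seeing that it vanishes automatically because $A$ has rank one. The linearization bookkeeping for the two-element group and the transfer of ampleness through $\pi$ are both formal once this point is in hand.
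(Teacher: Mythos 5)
Your proposal is correct and follows essentially the same route as the paper: descend the $S_{2}$-invariant line bundle $O_X(A)\boxtimes O_X(A)$ to $Sym^{2}X$, then transfer ampleness back through the finite surjective quotient map $\pi$. The only difference is that you explicitly verify the descent criterion at the diagonal (where the swap acts as the identity on the one-dimensional fiber $A_{x}\otimes A_{x}$), a step the paper delegates to a citation of Fogarty, so your version is, if anything, the more complete one.
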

    \begin{proof}
    Let $L'=O_X(A) \otimes O_X(A)$. Now consider the involution map $\sigma: X \times X \rightarrow X \times X$. As it interchanges the components we have  $\sigma^{*}L' \cong O_X(A) \boxtimes O_X(A)$. So the given line bundle $L'$ is invariant under the action of $S_2$. Thus, by the theory of descent of line bundles on the quotient, there exists a line bundle $N$ on $Sym^{2}X$  such that $\pi^{*}N \cong O_X(A) \boxtimes O_X(A)$\cite[proposition 3.6]{Fog}. Now we will show that the line bundle $N$ is ample. To do so, first we show that $\pi^{*}N$ is ample. As $A$ is an ample line bundle and taking tensor product commutes with pull back, we have $(\pi^{*}N)^{\otimes k} \cong A^{\otimes k} \boxtimes A^{\otimes k}$ for sufficiently large $k$. Now, each factor in the tensor power gives an embedding of $X$ in some $\mathbb{P}^{n}$ as they are very ample. Thus using further the Segre embedding we can embed $X \times X$ in some $\mathbb{P}^{M}$ for some suitable $M$. So we have that $\pi^{*}N$ is ample. Now using proposition \ref{proposition 3.1} we have that $\pi$ is a finite, surjective morphism. Thus we can directly conclude that $N$ is an ample line bundle on $Sym^{2}X$.

 \end{proof}

 \textbf{Proof of Theorem 1.1}: Let $X$ be a smooth projective variety of dimension $m$. Suppose $E$ is an Ulrich bundle on $(X,O_X(1))$ of rank $r$. We will prove the theorem for $n=2$ and similar argument would follow for any $n$. Using proposition \ref{proposition 2.4} we can construct an Ulrich bundle for $(X, O_X(d))$ for any $d\geq1$\cite{Arnuad}. Consider the quotient morphism $\pi: X\times X \rightarrow Sym^{2}X$. Now using lemma \ref{lemma 3.2} there exist an ample line bundle $N$ on $Sym^{2}X$ such that $\pi^{*}N \cong O_X(1) \boxtimes O_X(1)$. Consider $t\gg 0$ such that $\pi^{*}N^{\otimes t}$ and $N^{\otimes t}$  both are very ample. As we can construct Ulrich bundle on $(X, O_X(d))$ for any $d\geq 1$, using proposition \ref{proposition 2.2} we have an Ulrich bundle on $(X \times X,O_X(t) \boxtimes O_X(t))$. Let's 
 call this Ulrich bundle $F$. The quotient morphism is finite and surjective from proposition \ref{proposition 3.1}. Thus from proposition  \ref{proposition 2.3} we have that $\pi_{*}F$ is an Ulrich sheaf on $(Sym^{2}X,N^{\otimes t})$.

\begin{corollary}\label{corollary 3.2}
    Let $C$ be a smooth projective curve and let $Z_{n}(C)$ be the nested Hilbert scheme of $n$ points on $C$, where $n$ is a tuple $(m_1,m_2,...,m_k)$ with a partial ordering $m_1 <m_2<...m_k$. Then $Z_{n}(C)$ carries an Ulrich sheaf with respect to a polarization.
    
\end{corollary}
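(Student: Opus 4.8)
The plan is to reduce the statement to Theorem~\ref{theorem 1.1} through the classical fact that, for a smooth curve, a nested Hilbert scheme of points is just a product of symmetric powers. Since $C$ is smooth of dimension one, every length-$m$ subscheme of $C$ is an effective divisor of degree $m$, so $Hilb^{m}C \cong Sym^{m}C$, and a nested chain $Z_{1} \subseteq \cdots \subseteq Z_{k}$ corresponds to a chain of effective divisors $D_{1} \leq \cdots \leq D_{k}$ with $\deg D_{i}=m_{i}$. Setting $a_{1}=m_{1}$ and $a_{i}=m_{i}-m_{i-1}$ for $i\geq 2$, I would consider the difference morphism
\begin{equation*}
\phi\colon Z_{n}(C)\longrightarrow \prod_{i=1}^{k}Sym^{a_{i}}C,\qquad (D_{1},\dots,D_{k})\longmapsto (D_{1},\,D_{2}-D_{1},\,\dots,\,D_{k}-D_{k-1}),
\end{equation*}
which is well defined because each $D_{i}-D_{i-1}$ is effective of degree $a_{i}$; its inverse is the addition map $(E_{1},\dots,E_{k})\mapsto(E_{1},E_{1}+E_{2},\dots,E_{1}+\cdots+E_{k})$. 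Both are induced by the universal relative divisors, so $\phi$ is an isomorphism of smooth projective varieties.

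With this identification, I would apply Theorem~\ref{theorem 1.1} factor by factor. Every smooth projective curve carries an Ulrich bundle, so each $Sym^{a_{i}}C$ carries an Ulrich sheaf $\mathcal{F}_{i}$ with respect to the natural polarization $N_{i}$ produced by Lemma~\ref{lemma 3.2} and the proof of Theorem~\ref{theorem 1.1}. It then remains to build an Ulrich sheaf on the product out of the $\mathcal{F}_{i}$. For this I would invoke the external-tensor construction of Proposition~\ref{proposition 2.2}: although stated for bundles, its proof rests only on the K\"unneth formula, which over the base field $\mathbb{C}$ holds verbatim for coherent sheaves because the external product $\mathcal{F}_{i}\boxtimes\mathcal{F}_{j}$ carries no higher Tor over a field. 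Applying this iteratively to $\mathcal{F}_{1},\dots,\mathcal{F}_{k}$ with the twists prescribed in Proposition~\ref{proposition 2.2} yields an Ulrich sheaf on $\prod_{i}Sym^{a_{i}}C\cong Z_{n}(C)$ with respect to the box product of the $N_{i}$, which serves as the required polarization.

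I expect the main obstacle to be twofold. First, one must check that $\phi$ and its inverse are genuine morphisms of schemes rather than mere bijections on closed points; this is done by working with the universal families of relative effective Cartier divisors over $Z_{n}(C)$ and over $\prod_{i}Sym^{a_{i}}C$, where the difference $\mathcal{D}_{i}-\mathcal{D}_{i-1}$ is again a relative effective divisor and hence classifies a map to $Sym^{a_{i}}C$. Second, one must record explicitly the coherent-sheaf version of Proposition~\ref{proposition 2.2}, since the sheaves $\mathcal{F}_{i}=\pi_{i*}F_{i}$ delivered by Theorem~\ref{theorem 1.1} are in general only coherent---the quotient maps $C^{a_{i}}\to Sym^{a_{i}}C$ are ramified, hence not flat---so the bundle statement does not apply directly. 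Both points are routine but deserve to be spelled out.
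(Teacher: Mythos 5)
Your proposal is correct and follows essentially the same route as the paper, which likewise identifies $Z_{n}(C)$ with a product of symmetric powers (citing Cheah), produces an Ulrich sheaf on each factor via Theorem~\ref{theorem 1.1}, and combines them with Proposition~\ref{proposition 2.2}. One small correction to your cautionary remark: the quotient map $C^{a_{i}}\to Sym^{a_{i}}C$ is a finite surjective morphism between smooth varieties and is therefore flat by miracle flatness even though it is ramified, so the pushforward is in fact locally free and the bundle version of Proposition~\ref{proposition 2.2} applies directly (though your K\"unneth fallback would also suffice).
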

\begin{proof}
    In \cite{Jan}, it is shown that the nested Hilbert scheme $Z_{n}(C)$ is the product of symmetric powers of the curve. Any smooth projective curve $(C,O_C(1))$ carries an Ulrich bundle \cite[page 66]{Costa}. Using theorem \ref{theorem 1.1} we can construct an Ulrich bundle on $Sym^{m}C$ for any $m >0$. So the existence of the Ulrich bundle on $Z_n(C)$ now follows from proposition \ref{proposition 2.2} with respect to a certain polarization.   
\end{proof}
\begin{corollary}
    Let $X_{i}$ be smooth projective varieties with $i=1,2,...,n$ such that $Pic(X_i) \cong \mathbb{Z}.[O_{X_{i}}(1)]$ and $H^{1}(X_i,O_{X_i})=0$ for all $i$. Consider $Y=X_1 \times X_2\times ...X_n$. Let $G$ be a finite group acting on $Y$, such that $Y/G$ is a projective variety. If each $(X_i, O_{X_i}(1))$ carries Ulrich bundle, then there exists a polarization with respect to which $Y/G$ carries an Ulrich sheaf.
\end{corollary}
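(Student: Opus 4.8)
The plan is to reproduce the strategy of the proof of Theorem \ref{theorem 1.1}: first manufacture an Ulrich bundle on the product $Y$ with respect to a box-product polarization, and then push it forward along the finite quotient map $q\colon Y\to Y/G$. Concretely, if I can produce a very ample line bundle $N$ on $Y/G$ whose pullback $q^{*}N$ is of the form $O_{X_1}(d)\boxtimes\cdots\boxtimes O_{X_n}(d)$ for some $d\geq 1$, then Proposition \ref{proposition 2.4} gives an Ulrich bundle on each $(X_i,O_{X_i}(d))$, Proposition \ref{proposition 2.2} (applied $n-1$ times) assembles these into an Ulrich bundle $F$ on $(Y,q^{*}N)$, and Proposition \ref{proposition 2.3} turns $q_{*}F$ into an Ulrich sheaf on $(Y/G,N)$. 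So everything reduces to constructing such an $N$, and this is exactly where the two standing hypotheses are used.

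First I would check that $q$ is finite. Since $Y$ is quasi-projective, every finite set of points---in particular every $G$-orbit---lies in an affine open, so Proposition \ref{proposition 3.1} applies and $q$ is finite and surjective. Next I would compute $\mathrm{Pic}(Y)$. Because $H^{1}(X_i,O_{X_i})=0$ forces $\mathrm{Pic}^{0}(X_i)=0$, all the mixed K\"unneth contributions $\mathrm{Hom}(\mathrm{Alb}(X_i),\mathrm{Pic}^{0}(X_j))$ vanish, and together with $\mathrm{Pic}(X_i)\cong\mathbb{Z}\cdot[O_{X_i}(1)]$ this gives $\mathrm{Pic}(Y)\cong\mathbb{Z}^{n}$, freely generated by the classes $e_i=[p_i^{*}O_{X_i}(1)]$, where $p_i$ is the $i$-th projection.

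The heart of the argument, and the step I expect to be the main obstacle, is to show that the natural polarization $L_0=O_{X_1}(1)\boxtimes\cdots\boxtimes O_{X_n}(1)$, i.e. the class $e_1+\cdots+e_n$, is $G$-invariant, and then to descend (a power of) it. For invariance, note that a box product $\boxtimes_i O_{X_i}(a_i)$ is ample precisely when every $a_i>0$ (restrict to the factors), so the ample cone in $\mathrm{Pic}(Y)\otimes\mathbb{R}\cong\mathbb{R}^{n}$ is exactly the open positive orthant. Each $g\in G$ acts on $\mathrm{Pic}(Y)$ through an element of $GL_n(\mathbb{Z})$ that preserves ampleness, hence preserves this orthant; such an automorphism must permute its extremal rays, i.e. permute the primitive generators $e_i$. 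Consequently $g^{*}(e_1+\cdots+e_n)=e_1+\cdots+e_n$, so $g^{*}L_0\cong L_0$ for all $g$ and $L_0$ is $G$-invariant.

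Finally I would descend. The norm bundle $\bigotimes_{g\in G}g^{*}L_0\cong L_0^{\otimes|G|}$ carries a canonical $G$-linearization, and after passing to a further power $L_0^{\otimes m}$ (to kill the characters through which point-stabilizers act on fibers) descent theory produces a line bundle $N_0$ on $Y/G$ with $q^{*}N_0\cong L_0^{\otimes m}$, exactly as in Lemma \ref{lemma 3.2}; since $q$ is finite and surjective and $L_0^{\otimes m}$ is ample, $N_0$ is ample. Choosing $t\gg 0$ so that both $N_0^{\otimes t}$ and $q^{*}(N_0^{\otimes t})$ are very ample and setting $N=N_0^{\otimes t}$, the pullback $q^{*}N\cong O_{X_1}(mt)\boxtimes\cdots\boxtimes O_{X_n}(mt)$ has the required box-product shape, and the mechanism of the first paragraph yields the Ulrich sheaf $q_{*}F$ on $(Y/G,N)$. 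The only genuinely delicate point is the descent of the polarization to an honest line bundle on the (possibly singular) quotient; the $\mathrm{Pic}$ and $H^{1}$ hypotheses are precisely what guarantee the invariance that makes this descent possible.
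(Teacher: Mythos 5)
Your argument is correct and reaches the same endpoint --- a box-product polarization on $Y$ fed into Propositions \ref{proposition 2.4}, \ref{proposition 2.2} and \ref{proposition 2.3} --- but on the one nontrivial step, producing that polarization, you travel in the opposite direction from the paper. The paper starts on the quotient: since $Y/G$ is projective it admits a very ample line bundle $L$; the quotient map $q$ is finite and surjective by Proposition \ref{proposition 3.1}, so $q^{*}L$ is ample; and the hypotheses $\mathrm{Pic}(X_i)\cong\mathbb{Z}\cdot[O_{X_i}(1)]$ and $H^{1}(X_i,O_{X_i})=0$ give $\mathrm{Pic}(Y)\cong\mathbb{Z}^{n}$, whence $q^{*}L\cong O_{X_1}(t_1)\boxtimes\cdots\boxtimes O_{X_n}(t_n)$ with every $t_i>0$ by ampleness. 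No descent is needed at all, and the Ulrich sheaf is $q_{*}F$ for $(Y/G,L)$ with $F$ assembled exactly as in your first paragraph. You instead start on $Y$ with $L_0=O_{X_1}(1)\boxtimes\cdots\boxtimes O_{X_n}(1)$, prove its $G$-invariance by noting that $G$ acts on $\mathrm{Pic}(Y)\cong\mathbb{Z}^{n}$ preserving the ample cone (the positive orthant) and hence permutes the generators $e_i$, and then descend a suitable power via the norm construction after killing the stabilizer characters. Both routes are sound; yours pays for the extra machinery of linearization and descent to a possibly singular quotient (the delicate point you yourself flag), which the pullback trick sidesteps entirely, but in exchange your ample-cone argument makes the $G$-invariance of the natural polarization explicit --- the same content that Lemma \ref{lemma 3.2} supplies by hand in the $Sym^{2}$ case.
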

\begin{proof}
    We will show the result for $n=2$ and similar argument would follow for any $n$. Now by our hypothesis we have that $Pic(X_1 \times X_2) \cong \mathbb{Z} \times \mathbb{Z}$\cite[page 292]{Hart}. Consider a very ample line bundle $L$ on $Y$. As $G$ is a finite group, the quotient map $\pi: Y \rightarrow Y/G$ is finite from proposition \ref{proposition 3.1}. We have $\pi^{*}L \cong O_{X_1}(t_1) \boxtimes O_{X_2}(t_2)$ for $t_1,t_2>0$. Now using a similar technique as theorem \ref{theorem 1.1} we can show the existence of Ulrich sheaf on $(Y/G, L)$.  
\end{proof}
Based on our previous observation on existence of Ulrich bundle on $Hillb^{n}C$, where $C$ is a smooth curve, we now have the following question,\\
\textbf{Question}: Let $X$ be a smooth projective variety of dimension $>1$, which carries an Ulrich bundle with respect to a polarization. Does there exist a polarization with respect to which $Hillb^{n}X$ also carries an Ulrich sheaf?\\
\section{Ulrich Bundle on Blow-up}
In this section, we give a sufficient condition for existence of an  Ulrich bundle on Blow-up of an abelian variety along an abelian subvariety if the abelian variety itself carries an Ulrich bundle. Before going to proof of the theorem, our first goal is to determine a very ample divisor on the blow-up. We state the following theorem from \cite{Secci}.\\

\begin{theorem}\label{theorem 4.1}
Let $L$ be a very ample line bundle on a non-singular projective variety $X$, and let $Y\subseteq X$ be a closed subscheme corresponding to the ideal sheaf $I_Y$. Let $\pi:X' \rightarrow X$ be the blowing-up of $X$ with respect to $I_Y$ and let $E=\pi^{-1}(Y)$ be the exceptional divisor. Assume that $I_Y \otimes L^{\otimes t}$ is globally generated. Then $t'\pi^{*}L-E$ is very ample  for $t'\geq t+1$.
\end{theorem}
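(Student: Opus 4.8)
The plan is to realize $t'\pi^{*}L-E$ as the restriction of $O(1)$ under an explicit closed immersion of $X'$ into a product of projective spaces, composed with the Segre embedding; since a pullback of $O(1)$ along a closed immersion into projective space is very ample, this proves the theorem. Because $Y$ is only assumed to be a closed subscheme, I would avoid geometric ``separate points and tangent vectors'' arguments and instead work throughout with the Proj description $X'=\mathrm{Proj}_X\bigl(\bigoplus_{n\ge 0} I_Y^{\,n}\bigr)$ of the blow-up, under which the relative $O_{X'}(1)$ is exactly $O_{X'}(-E)$ (this is the incarnation of $\pi_{*}O_{X'}(-nE)=I_Y^{\,n}$ from Proposition \ref{proposition 2.7}).

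First I would exploit the hypothesis. Setting $V=H^{0}(X,I_Y\otimes L^{\otimes t})$, global generation of $I_Y\otimes L^{\otimes t}$ says the evaluation map $V\otimes O_X\to I_Y\otimes L^{\otimes t}$ is surjective; twisting by $L^{\otimes(-t)}$ yields a surjection $V\otimes L^{\otimes(-t)}\twoheadrightarrow I_Y$. Since the Rees algebra $\bigoplus_n I_Y^{\,n}$ is generated in degree one over $O_X$, this induces a surjection of graded $O_X$-algebras $\mathrm{Sym}\bigl(V\otimes L^{\otimes(-t)}\bigr)\twoheadrightarrow\bigoplus_n I_Y^{\,n}$, hence a closed immersion $X'\hookrightarrow \mathbb{P}_X\bigl(V\otimes L^{\otimes(-t)}\bigr)$ over $X$ under which $O_{X'}(1)=O_{X'}(-E)$ is the restriction of the relative $O(1)$.

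Next I would untwist the projective bundle. Using the isomorphism $\mathbb{P}(\mathcal{E}\otimes\mathcal{L})\cong\mathbb{P}(\mathcal{E})$ under which $O(1)$ changes by $\otimes\, p^{*}\mathcal{L}$, together with the fact that $V\otimes O_X$ is the trivial bundle, I identify $\mathbb{P}_X\bigl(V\otimes L^{\otimes(-t)}\bigr)\cong \mathbb{P}(V)\times X$ with relative $O(1)$ equal to $\mathrm{pr}_1^{*}O_{\mathbb{P}(V)}(1)\otimes\mathrm{pr}_2^{*}L^{\otimes(-t)}$. Writing $\phi$ for the composite $X'\hookrightarrow\mathbb{P}(V)\times X\xrightarrow{\mathrm{pr}_1}\mathbb{P}(V)$ and noting $\mathrm{pr}_2|_{X'}=\pi$, restriction to $X'$ gives the key identity $O_{X'}(-E)=\phi^{*}O_{\mathbb{P}(V)}(1)\otimes\pi^{*}L^{\otimes(-t)}$, that is, $t\pi^{*}L-E=\phi^{*}O_{\mathbb{P}(V)}(1)$, and simultaneously exhibits $(\phi,\pi):X'\hookrightarrow\mathbb{P}(V)\times X$ as a closed immersion. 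This identification is the main obstacle: it requires carefully matching the convention $O_{X'}(-E)=O_{X'}(1)$ for the Proj twisting sheaf with the twist of $O(1)$ on the projective bundle $\mathbb{P}(\mathcal{E}\otimes\mathcal{L})$. Once it is in place, everything that follows is formal.

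Finally I would feed in the bound $t'\ge t+1$. As $t'-t\ge 1$ and $L$ is very ample, $L^{\otimes(t'-t)}$ is very ample, so it defines a closed immersion $\iota:X\hookrightarrow\mathbb{P}(W)$, $W=H^{0}(X,L^{\otimes(t'-t)})$, with $\iota^{*}O_{\mathbb{P}(W)}(1)=L^{\otimes(t'-t)}$. Composing the closed immersion $(\phi,\pi)$ with $\mathrm{id}\times\iota$ and then with the Segre embedding $\mathbb{P}(V)\times\mathbb{P}(W)\hookrightarrow\mathbb{P}(V\otimes W)$ produces a closed immersion $X'\hookrightarrow\mathbb{P}(V\otimes W)$ along which $O(1)$ pulls back to $\phi^{*}O_{\mathbb{P}(V)}(1)\otimes\pi^{*}L^{\otimes(t'-t)}=(t\pi^{*}L-E)\otimes\pi^{*}L^{\otimes(t'-t)}=t'\pi^{*}L-E$. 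Being the restriction of $O(1)$ under a closed immersion into projective space, $t'\pi^{*}L-E$ is very ample, as claimed.
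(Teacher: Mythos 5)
Your proposal is correct. Note that the paper itself gives no proof of this statement --- it is quoted from \cite{Secci} --- so there is nothing internal to compare against; your argument is the standard one for this classical fact: the surjection $V\otimes L^{\otimes(-t)}\twoheadrightarrow I_Y$ coming from global generation induces a closed immersion of $X'=\mathrm{Proj}_X\bigl(\bigoplus_n I_Y^{\,n}\bigr)$ into $\mathbb{P}(V)\times X$ realizing $t\pi^{*}L-E$ as the pullback of $O_{\mathbb{P}(V)}(1)$, and composing with the embedding of $X$ by $L^{\otimes(t'-t)}$ and the Segre map exhibits $t'\pi^{*}L-E$ as the restriction of a hyperplane class. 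All the conventions you flag (the identification $O_{X'}(1)=O_{X'}(-E)$ and the twist under $\mathbb{P}(\mathcal{E}\otimes\mathcal{L})\cong\mathbb{P}(\mathcal{E})$) are handled consistently, and the argument correctly uses only that $Y$ is a closed subscheme.
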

Now we state and prove a lemma which characterizes the existence of an Ulrich bundle on blow-up. 
\begin{lemma}\label{lemma 4.2}
Let $X$ be smooth of dimension $n$ with $Z \subseteq X$ be smooth codimension $e$ subvariety. Assume  $\pi:X' \rightarrow X$ be the blow up of $X$ along $Z$ with exceptional divisor $E$. Take $L$ be very ample on $X$ and such that by theorem \ref{theorem 4.1} $L'=\pi^{*}L-E$ is very ample on $X'$. Let $F$ be Ulrich for $(X,L)$. Then,
    $(A)$  $(\pi^{*}F)((e-n-1)E)$ is Ulrich for $(X',L')$\\
    
    if and only if\\
    
    (B)  $H^{i}(F(-pL) \otimes I_{Z}^{n+1-e-p})=0$ for $1 \leq p \leq n-e$\(.\)
\end{lemma}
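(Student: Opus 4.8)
Write $G := (\pi^{*}F)\bigl((e-n-1)E\bigr)$. Since $\pi$ is birational we have $\dim X' = n$, so by Definition \ref{definition 2.1} statement $(A)$ is precisely the vanishing
\begin{equation*}
H^{i}\bigl(X',\, G\otimes (L')^{\otimes(-j)}\bigr)=0 \qquad\text{for all } i\ge 0 \text{ and } 1\le j\le n.
\end{equation*}
The first move is to use $L'=\pi^{*}L-E$ together with the projection formula to rewrite each twisted bundle as
\begin{equation*}
G\otimes (L')^{\otimes(-j)}\;\cong\;\pi^{*}\!\bigl(F(-jL)\bigr)\otimes O_{X'}\bigl(a_{j}E\bigr),\qquad a_{j}:=e-n-1+j,
\end{equation*}
so that, as $j$ runs over $1\le j\le n$, the integer $a_{j}$ runs over $e-n\le a_{j}\le e-1$. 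The plan is then to push everything down along $\pi$ and read the cohomology off on the smooth base $X$, where $F$ lives.

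The computational heart is determining $R^{q}\pi_{*}O_{X'}(aE)$ for $e-n\le a\le e-1$. For $a\le 0$, writing $a=-m$ with $m\ge 0$, Proposition \ref{proposition 2.7} gives $\pi_{*}O_{X'}(-mE)=I_{Z}^{m}$ and $R^{q}\pi_{*}O_{X'}(-mE)=0$ for $q>0$. For $0\le a\le e-1$ I would instead exploit that $E=\mathbb{P}(N_{Z/X})$ is a $\mathbb{P}^{e-1}$-bundle over $Z$ with $O_{X'}(E)|_{E}=O_{E}(-1)$; twisting the structure sequence of $E$ by $O_{X'}(aE)$ yields
\begin{equation*}
0\to O_{X'}\bigl((a-1)E\bigr)\to O_{X'}(aE)\to O_{E}(-a)\to 0,
\end{equation*}
and because $H^{*}(\mathbb{P}^{e-1},O(-a))=0$ for $1\le a\le e-1$, all direct images of $O_{E}(-a)$ vanish. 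Inducting from the base case $a=0$ (where $\pi_{*}O_{X'}=O_{X}$ and $R^{>0}\pi_{*}O_{X'}=0$, standard for the blow-up of a smooth variety along a smooth centre) then gives $\pi_{*}O_{X'}(aE)=O_{X}$ and $R^{>0}\pi_{*}O_{X'}(aE)=0$ throughout $0\le a\le e-1$. Combining these with the projection formula, every higher direct image with $q>0$ vanishes, while the zeroth direct image equals $F(-jL)\otimes I_{Z}^{-a_{j}}$ when $a_{j}\le 0$ and $F(-jL)$ when $a_{j}\ge 0$.

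Since all higher direct images vanish, Proposition \ref{proposition 2.6} lets me replace cohomology on $X'$ by cohomology on $X$:
\begin{equation*}
H^{i}\bigl(X',\, G\otimes (L')^{\otimes(-j)}\bigr)\;\cong\;
\begin{cases}
H^{i}\bigl(X,\, F(-jL)\otimes I_{Z}^{\,n+1-e-j}\bigr), & 1\le j\le n+1-e,\\
H^{i}\bigl(X,\, F(-jL)\bigr), & n+1-e\le j\le n.
\end{cases}
\end{equation*}
For $n+1-e\le j\le n$ the right-hand side vanishes for every $i$ \emph{unconditionally}, because $F$ is Ulrich for $(X,L)$ and these indices satisfy $1\le j\le n$ (Definition \ref{definition 2.1}); the same reasoning disposes of the overlap value $j=n+1-e$, where $I_{Z}^{0}=O_{X}$. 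Hence $(A)$ is the conjunction of these free vanishings with the residual conditions $H^{i}\bigl(X,\,F(-jL)\otimes I_{Z}^{\,n+1-e-j}\bigr)=0$ for $1\le j\le n-e$, and since the former always hold, $(A)$ is equivalent to the latter, which upon setting $p=j$ is exactly $(B)$.

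The single genuinely geometric input is the direct-image computation for the positive twists $O_{X'}(aE)$ with $0<a\le e-1$; this is where the projective-bundle structure of the exceptional divisor and the fibrewise vanishing $H^{*}(\mathbb{P}^{e-1},O(-a))=0$ are indispensable, and I expect it to be the main obstacle. Everything else is bookkeeping through the projection formula and Proposition \ref{proposition 2.6}, with the Ulrich property of $F$ killing the non-negative-twist terms for free; the one index to watch is $j=n+1-e$, harmless because $I_{Z}^{0}=O_{X}$ returns it to the Ulrich vanishing on $X$.
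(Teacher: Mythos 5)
Your proof is correct and follows the same skeleton as the paper's argument: rewrite $(\pi^{*}F)((e-n-1)E)(-pL')$ as $\pi^{*}(F(-pL))\otimes O_{X'}((p+e-n-1)E)$, then split according to whether the coefficient of $E$ is negative (the range $1\le p\le n-e$, handled via Proposition \ref{proposition 2.7}, the projection formula and Proposition \ref{proposition 2.6}, producing exactly the ideal-power twists of condition (B)) or lies in $\{0,\dots,e-1\}$ (the range $n+1-e\le p\le n$, where the cohomology reduces to $H^{i}(F(-pL))=0$ by the Ulrich hypothesis). The one genuine difference is in the second range: the paper simply cites \cite[Lemma 1.4]{Ein} for the isomorphism $H^{i}(\pi^{*}F(-pL)\otimes O_{X'}(aE))\cong H^{i}(F(-pL))$, whereas you prove the underlying direct-image statement $\pi_{*}O_{X'}(aE)=O_{X}$ and $R^{>0}\pi_{*}O_{X'}(aE)=0$ for $0\le a\le e-1$ from scratch, by induction on $a$ using the twisted structure sequence of $E$, the identification $O_{X'}(E)|_{E}=O_{E}(-1)$ on the $\mathbb{P}^{e-1}$-bundle $E\to Z$, and the fibrewise vanishing $H^{*}(\mathbb{P}^{e-1},O(-a))=0$ for $1\le a\le e-1$. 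This buys a self-contained argument that treats both ranges uniformly through Leray, at the cost of a page of standard computation that the paper outsources; your handling of the overlap index $p=n+1-e$ via $I_{Z}^{0}=O_{X}$ is also cleaner than the paper's, which silently assigns it to the second case.
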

\begin{proof}
First note that,
  $(\pi^{*}F)((e-n-1)E)(-pL')=\pi^{*}(F(-pL) \otimes O_{X'}((p+e-n-1)E)$. Now we will consider two cases depending on the range of $p$.\\
  
   Case 1: $1 \leq p \leq n-e$\(.\)\\
  Then we have that $p+e-n-1 <0$. Using the projection formula and proposition \ref{proposition 2.7} we have,
  $R^{j}\pi_{*}(\pi^{*}F(-pL)) \otimes O_{X'}((p+e-n-1)E)=0$ for $j>0$. Now using proposition \ref{proposition 2.6} we get that,
  \begin{equation*}
      H^{i}((\pi^{*}F)((e-n-1)E)(-pL')) \cong H^{i}(\pi^{*}F(-pL)) \otimes O_{X'}((p+e-n-1)E) \cong H^{i}(F(-pL) \otimes I_{Z}^{n+1-e-p})
  \end{equation*}
   Case 2: $n-e+1 \leq p \leq n$\(.\)\\
  
  We have that $0 \leq p+e-n-1 \leq e-1$. Now using \cite[Lemma 1.4]{Ein} we get that,
  \begin{equation*}
     H^{i}((\pi^{*}F)(e-n-1)E)(-pL') \cong H^{i}(\pi^{*}F(-pL) \otimes O_{X'}((p+e-n-1)E) \cong H^{i}(F(-pL))=0
  \end{equation*}
  Therefore, if (A) holds, we get (B) by considering case 1. If (B) holds, we get (A) by considering the cases $1$ and $2$. 
  
  \end{proof} 
\begin{lemma}\label{lemma 4.3}
  Let $X$ be smooth variety of dimension $n$ with $Z \subseteq X$ be smooth subvariety of codimension $e$. Assume  $\pi:X' \rightarrow X$ be the blow up of $X$ along $Z$ with exceptional diviosr $E$. Let $t\gg 0$ be such that $I_{Z}(t)$ is globally generated. Fix $L=(t+1)H$, so that $L'=\pi^{*}L-E$ is very ample on $X'$. If $G$ is an Ulrich bundle for $(X,H)$, using proposition \ref{proposition 2.4} there is an Ulrich bundle  $F$ for $(X,L)$. If $F$ satisfies (B), then there is an Ulrich bundle for $(X',L')$.
  \end{lemma}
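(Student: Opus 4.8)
The plan is to recognise that Lemma~\ref{lemma 4.3} is an immediate packaging of Lemma~\ref{lemma 4.2}: the hypotheses here are arranged precisely so that every hypothesis of Lemma~\ref{lemma 4.2} is satisfied, after which the ``(B)$\Rightarrow$(A)'' direction of that lemma delivers the bundle. So the entire task is to assemble the pieces and invoke the earlier lemma.

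First I would check that $L'=\pi^{*}L-E$ is very ample on $X'$. Since $L=(t+1)H$ and $I_{Z}(t)=I_{Z}\otimes H^{\otimes t}$ is globally generated by hypothesis, Theorem~\ref{theorem 4.1} applies with $t'=t+1\geq t+1$, giving the very ampleness of $L'$. Next, since $G$ is an Ulrich bundle for $(X,H)$ with $H$ very ample, Proposition~\ref{proposition 2.4} produces an Ulrich bundle $F$ for $(X,L)=(X,H^{\otimes(t+1)})$; this is precisely the bundle named in the statement.

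With $X$, $Z$, $\pi$, $E$, $L$ and $F$ as above, all the hypotheses of Lemma~\ref{lemma 4.2} now hold: $X$ is smooth of dimension $n$, $Z\subseteq X$ is smooth of codimension $e$, $\pi$ is the blow-up along $Z$ with exceptional divisor $E$, $L$ is very ample with $L'=\pi^{*}L-E$ very ample, and $F$ is Ulrich for $(X,L)$. Hence, since $F$ satisfies condition (B) by assumption, the ``(B)$\Rightarrow$(A)'' implication of Lemma~\ref{lemma 4.2} shows that $(\pi^{*}F)((e-n-1)E)$ is Ulrich for $(X',L')$.

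Finally I would note that this candidate is genuinely locally free: $F$ is a vector bundle, so $\pi^{*}F$ is again a vector bundle, and twisting by the line bundle $O_{X'}((e-n-1)E)$ preserves local freeness. Thus $(\pi^{*}F)((e-n-1)E)$ is an Ulrich \emph{bundle} on $(X',L')$, as claimed. I do not expect a genuine obstacle here: all the substantive cohomological computation (the projection formula together with Propositions~\ref{proposition 2.6} and~\ref{proposition 2.7}, and Ein's lemma for the higher twists of $E$) has already been carried out inside Lemma~\ref{lemma 4.2}. The only content of the present lemma is to verify that the explicit choices $L=(t+1)H$ and the $F$ furnished by Proposition~\ref{proposition 2.4} meet the hypotheses of Lemma~\ref{lemma 4.2}, which is exactly what the two opening steps above accomplish.
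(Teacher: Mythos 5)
Your proposal is correct and follows exactly the paper's route: the paper's entire proof of Lemma~\ref{lemma 4.3} is ``Apply Lemma~\ref{lemma 4.2}'', and you have simply spelled out the verification that the hypotheses of that lemma are met (very ampleness of $L'$ via Theorem~\ref{theorem 4.1} and the existence of $F$ via Proposition~\ref{proposition 2.4}) before invoking the (B)$\Rightarrow$(A) direction. Your added remark that the resulting sheaf is genuinely locally free is a harmless and welcome extra detail.
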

  \begin{proof}
      Apply lemma \ref{lemma 4.2}
  \end{proof}
  Now we state and prove a lemma which is essential in proving the main theorem. Not only it gives a way to use lemma \ref{lemma 4.3} and lemma \ref{lemma 4.2}  but also it characterizes the existence of an  Ulrich bundle on abelian subvariety if the abelian variety itself carries an Ulrich bundle.
 \begin{lemma}\label{lemma 4.4}
     Let $X$ be an abelian variety and  let $Z$ be an abelian subvariety. Suppose $F$ is an Ulrich bundle on $(X,L)$. Then $F|_{Z}$ is Ulrich on $(Z,L|_{Z})$ if and only if
     \begin{equation*}
        H^{i}(X,F(-pL) \otimes I_{Z}^n)=0 
    \end{equation*}
    for $n\geq 1$, $i \geq 0$ and for $1\leq p \leq Dim Z$\(.\)
 \end{lemma}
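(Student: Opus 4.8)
The plan is to compare the cohomology of $F(-pL)\otimes I_Z^{\,n}$ on $X$ with the cohomology of $(F|_Z)(-pL|_Z)$ on $Z$, and the whole argument hinges on one special feature of the abelian setting: the subvariety $Z$ is regularly embedded with \emph{trivial} normal bundle.

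First I would record this geometric input. The tangent bundle of an abelian variety is globally trivial, $T_X\cong \mathcal{O}_X\otimes_{\mathbb{C}}T_0X$, and likewise $T_Z\cong \mathcal{O}_Z\otimes_{\mathbb{C}}T_0Z$; since the inclusion $Z\hookrightarrow X$ is induced on tangent spaces by $T_0Z\hookrightarrow T_0X$, the normal bundle $N_{Z/X}=(T_X|_Z)/T_Z\cong \mathcal{O}_Z\otimes_{\mathbb{C}}(T_0X/T_0Z)$ is trivial of rank equal to the codimension $e$ of $Z$, and dually $I_Z/I_Z^2\cong \mathcal{O}_Z^{\oplus e}$. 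As $Z\subseteq X$ is a regular (smooth in smooth) embedding, the associated graded of the ideal is the symmetric algebra of the conormal bundle, so
$$I_Z^{\,n}/I_Z^{\,n+1}\;\cong\;\operatorname{Sym}^n(I_Z/I_Z^2)\;\cong\;\mathcal{O}_Z^{\oplus r_n},\qquad r_n=\binom{n+e-1}{e-1}.$$
Tensoring with the locally free sheaf $F(-pL)$ therefore gives $F(-pL)\otimes\big(I_Z^{\,n}/I_Z^{\,n+1}\big)\cong\big((F|_Z)(-pL|_Z)\big)^{\oplus r_n}$, a sheaf supported on $Z$ whose cohomology over $X$ coincides with its cohomology over $Z$.

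The workhorse is then the short exact sequence obtained by tensoring $0\to I_Z^{\,n+1}\to I_Z^{\,n}\to I_Z^{\,n}/I_Z^{\,n+1}\to 0$ with $F(-pL)$, namely
$$0\to F(-pL)\otimes I_Z^{\,n+1}\to F(-pL)\otimes I_Z^{\,n}\to \big((F|_Z)(-pL|_Z)\big)^{\oplus r_n}\to 0.$$
For the implication from the displayed vanishing to the Ulrich property of $F|_Z$, I would take $n=1$: in the long exact cohomology sequence the two outer terms $H^i(X,F(-pL)\otimes I_Z)$ and $H^{i+1}(X,F(-pL)\otimes I_Z^2)$ vanish by hypothesis (for $1\le p\le \dim Z$), forcing $H^i\big(Z,(F|_Z)(-pL|_Z)\big)=0$ for all $i\ge 0$, which is exactly Ulrich-ness of $F|_Z$ on $(Z,L|_Z)$. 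For the converse I would induct on $n$. The base case $n=1$ uses the sequence with $I_Z^{\,0}=\mathcal{O}_X$, i.e. $0\to F(-pL)\otimes I_Z\to F(-pL)\to (F|_Z)(-pL|_Z)\to 0$: since $F$ is Ulrich on $(X,L)$ we have $H^i(X,F(-pL))=0$ for $1\le p\le \dim X$, and since $F|_Z$ is assumed Ulrich we have $H^{i}\big(Z,(F|_Z)(-pL|_Z)\big)=0$ for $1\le p\le \dim Z$, so the long exact sequence gives $H^i(X,F(-pL)\otimes I_Z)=0$ in the required range. For the inductive step, assuming the vanishing for $I_Z^{\,n}$, the displayed sequence squeezes $H^i(X,F(-pL)\otimes I_Z^{\,n+1})$ between $H^{i-1}\big(Z,(F|_Z)(-pL|_Z)\big)^{\oplus r_n}=0$ and $H^i(X,F(-pL)\otimes I_Z^{\,n})=0$, closing the induction.

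The one genuine ingredient is the opening observation that an abelian subvariety is regularly embedded with trivial conormal bundle, which collapses every graded piece $I_Z^{\,n}/I_Z^{\,n+1}$ into copies of $\mathcal{O}_Z$; after that, the argument is a routine chase through the long exact sequences. The two points to watch are the $i=0$ edge of each sequence (where one uses only the injection coming from the preceding vanishing) and the inequality $\dim Z\le \dim X$, which ensures the Ulrich vanishing of $F$ on $X$ already covers the full range $1\le p\le \dim Z$ needed above.
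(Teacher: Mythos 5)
Your proof is correct and follows essentially the same route as the paper: you use the triviality of the conormal bundle of an abelian subvariety to identify each graded piece $I_Z^n/I_Z^{n+1}$ with copies of $\mathcal{O}_Z$, and then induct on $n$ through the long exact sequences of $0\to F(-pL)\otimes I_Z^{n+1}\to F(-pL)\otimes I_Z^{n}\to F(-pL)\otimes I_Z^{n}/I_Z^{n+1}\to 0$. The only (immaterial) difference is that for the ``vanishing implies Ulrich'' direction the paper reads off $H^i(F(-pL)\otimes\mathcal{O}_Z)=0$ from the $n=0$ sequence $0\to F(-pL)\otimes I_Z\to F(-pL)\to F(-pL)\otimes\mathcal{O}_Z\to 0$ together with the Ulrichness of $F$ on $X$, whereas you use the $n=1$ sequence and two instances of the hypothesis.
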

 \begin{proof}
 We first prove the vanishing for $n=0$. We have $I_{Z}^{0}=O_Z$. First assume $F|_{Z}$ is an Ulrich bundle on $(Z,L|_{Z})$. So we get,
    \begin{equation*}
        H^{i}(F(-pL) \otimes O_Z)=0 
    \end{equation*}
    for $1 \leq p \leq Dim Z$.
    Now using proposition \ref{proposition 2.5} we have that the sheaf $I/I^{2}$ is trivial as both the cotangent bundle of the variety and subvariety are trivial. As $Z$ is a smooth subvariety we have $Sym^{k}(I_Z/I_Z^{2}) \cong I_{Z}^k/I_{Z}^{k+1}$. So $I^{k}/I^{k+1}$ is also trivial for all $k \geq 1$. Now assume by induction hypothesis that the vanishing is true for $I^{k}$.  Let's consider the tensored short exact sequence,
    \begin{equation*}
        0 \rightarrow F(-pL) \otimes I^{k+1} \rightarrow F(-pL) \otimes I^{k} \rightarrow F(-pL) \otimes I^{k}/I^{k+1} \rightarrow 0
    \end{equation*}
   As the sheaf $I^k/I^{k+1}$ is trivial for all $k$ and the restriction is also Ulrich we have that,
    \begin{equation*}
        H^{i}(F(-pL) \otimes I^k/I^{k+1})=0
    \end{equation*}
    So now considering the corresponding long exact sequence of cohomology, we get using inductive hypothesis and above vanishing that,
    \begin{equation*}
        H^{i}(F(-pL) \otimes I^{k+1})=0
    \end{equation*}
    Thus from induction we conclude that,
    \begin{equation*}
        H^{i}(F(-pL) \otimes I^n)=0
    \end{equation*}
    for all $n\geq1$.\\
   
   Conversely, let's assume the vanishing of the cohomology of the ideal sheaf as above. Now consider the tensored short exact sequence,
   
   \begin{equation*}
       0 \rightarrow F(-pL) \otimes I \rightarrow F(-pL) \rightarrow F(-pL) \otimes O_{Z} \rightarrow 0
   \end{equation*}
   
  Now we have $H^{i}(F(-pL))=0$ for all $i\geq 0$ and $1 \leq p \leq dim X$ as $F$ is Ulrich on $(X,L)$. We have $H^{i}(F(-pL) \otimes I_{Z})=0$ for $i \geq 0$ and $1 \leq p \leq Dim Z$ by hypothesis. So we have $H^{i}(F(-pL) \otimes O_Z)=0$ for all $i \geq 0$ and $ 1 \leq p \leq Dim Z$. Thus $F|_{Z}$ is Ulrich on $(Z,L|_{Z})$.

 \end{proof}
 \textbf{proof of Theorem 1.2}: Let $(A,H)$ carry an Ulrich bundle $E$. Now we have for $t\gg 0$, $I_{{Z}/X}(t)$ is globally generated. Using proposition \ref{proposition 2.2}  and proposition \ref{proposition 2.4} we can show that  $F \boxtimes F(n(t+1)H)$ to be Ulrich bundle on $(A \times A,(t+1)H \otimes (t+1)H)$. Let's call $L=(t+1)H \otimes (t+1)H$. Clearly the vansihing cohomology of ideal sheaf in \ref{lemma 4.4} implies the vanishing cohomology condition of the ideal sheaf in \ref{lemma 4.2}. Now in view of lemma \ref{lemma 4.4} we need to show that the restriction of the bundle $F \boxtimes F(n(t+1)H)$ to $A \times \{0\}$ to be an Ulrich with respect to $L|_{Z}$. Consider the embedding ,
 $i:A \rightarrow A \times A$ to be $x: \rightarrow (x,0)$. Now we have\\
 \begin{equation*}
     i^{*}(p^{*}F \otimes q^{*}F(n(t+1)H)) \cong (p \circ i)^{*}F \otimes (q \circ i)^{*}F(n(t+1)H) \cong F \otimes O_{A}^{\oplus r}\cong F^{\oplus r}.
 \end{equation*}
 where $p:A \times A \rightarrow A$ ans $q:A \times A \rightarrow A$ be two projection map and the rank of $F$ is $r$. As $F$ is Ulrich on $(A,(t+1)H)$, then $F^{\oplus r}$ is also Ulrich on $(A,(t+1)H)$\cite{Arnuad}. So from lemma \ref{lemma 4.2}, it follows that $\pi^{*}(F(-n-1)E)$ is the required Ulrich bundle on $(X',\pi^{*}L-E)$.

 \begin{remark}: If $X=A^m$ where $m\geq2$ , choosing suitably the abelian subvariety $Z$ in the similar way as above we can construct several Ulrich bundles whose restriction to $Z$ is Ulrich.
 \end{remark}

\textbf{Acknowledgments}

We are extremely grateful to Prof. Angelo Felice Lopez for suggesting \ref{theorem 1.2} and having several useful discussions during the period of work. We have fixed the proof of lemma \ref{lemma 3.2} following his remarks. The first author acknowledges the National Board of Higher Mathematics(NBHM), Government of India,for the financial support with NBHM Fellowship No. 0203/13(35)/2021-RD-II/13162 as well as IISER Berhampur for providing facilities to carry out this work.


\begin{thebibliography}{33}
\bibitem[1]{Goto}
S.Goto,K.Ozeki,R.Takahashi,K.-I Waatneb,K.-I. Yoshida,Ulrich ideals and modules, Math.Proc.Cambridge Philos Soc,156(2014),137-166\\
\bibitem[2]{Arnuad}
Beauville, Arnaud, An introduction to Ulrich bundles, Eur.J.Math,2018,pp 26--36\\
\bibitem[3]{Hart}
Hartshorne, Robin, Algebraic geometry, Graduate Texts in Mathematics,  VOLUME No. 52, Springer-Verlag,New York-Heidelberg,1977\\
\bibitem[4]{Costa}
Costa, Laura and Mir o-Roig, Rosa Maria and Joan pons-Llopis, Ulrich bundles---from commutative algebra to algebraic geometry, De Gruyter in Mathematics, Volume 77,2021\\
\bibitem[5]{Jan}
Cheah  Jan,  Cellular decompositions for nested {H}ilbert schemes of
points, Pacific J. Math.,1998, pp 39--90\\
\bibitem[6]{Fog}
Fogarty John,Line bundles on quasi symmetric power of varieties,Journal of Algebra, pages 169-180,1977\\
\bibitem[7]{Secci}
Secci,Saverio Andrea, On the existence of Ulrich bundles on blown-up varieties at a point,Boll. Unione Mat.Ital pp-131-135,2020\\
\bibitem[8]{Kim}
Kim, Yeongrak, Ulrich bundles on blowing ups, C.R.Math.Acad.Sci.Paris,pp 1215-1218, 2016\\ 
\bibitem[9]{Eisenbud}
Eisenbud, David and Schreyer, Frank-Olaf,Resultants and {C}how forms via exterior syzygies,J. Amer. Math. Soc,pp-537-579, 2003\\
\bibitem[10]{Ein} 
Bertram, Aaron and Ein, Lawrence and Lazarsfeld, Robert,Vanishing theorems, a theorem of {S}everi, and the equations defining projective varieties,J. Amer. Math. Soc,pp 587-602,1991\\
 \bibitem[11]{stack}
 stack-exchange\\

\end{thebibliography}
  \end{document}